\theoremstyle{definition}
\newtheorem{theo}{{\bf{Theorem}}}[section]
\newtheorem{prop}[theo]{{\bf Proposition}}
\newtheorem{lem}[theo]{{\bf Lemma}}
\newtheorem{defi}{{\bf Definition}}[section]
\newtheorem*{cordpropPrime}{Proposition \ref{cordprop}$'$}
\newtheorem*{hbpropPrime}{Proposition \ref{hbprop0}$'$}
\newcommand{\FF}{\mathbb{F}}
\newcommand{\RR}{\mathbb{R}}
\title{A bound for plany Kakeya sets in $\FF_q^4$ using the planebrush method}
\author{Izabella \L{}aba\thanks{Department of Mathematics, University of British Columbia, Vancouver, BC, Canada}\and 
Mukul Rai Choudhuri\footnotemark[1]
\and Joshua Zahl\thanks{Chern Institute of Mathematics and LPMC, Nankai University, Tianjin, China}}
\date{June 2025}
\begin{document}

\maketitle

\begin{abstract}
Katz and Zahl \cite{katzzahl} used a planebrush argument to prove that Kakeya sets in $\RR^4$ have Hausdorff dimension at least 3.059. In the special case when the Kakeya set is {\it plany}, their argument gives a better lower bound of $10/3$. We give a nontechnical exposition of the Katz-Zahl argument for plany Kakeya sets in the finite field setting.

\end{abstract}

\section{Introduction}

A {\em Kakeya set} in $\RR^n$, for $n\geq 2$, is a compact set that contains a line segment pointing in every direction. A classic result of Besicovitch \cite{bes} states that Kakeya sets in $\RR^n$ may have $n$-dimensional Lebesgue measure 0; the {\em Kakeya conjecture} states that they must nonetheless have Hausdorff dimension $n$. This was proved for $n=2$ by Davies \cite{dav}, and, very recently, for $n=3$ by Wang and Zahl \cite{wangzahl}. For 
$n\geq 4$, the conjecture is still open, but there are many partial results obtained using a variety of methods. We refer the reader to \cite{katztao} for an overview of the history of the problem.

The finite field Kakeya problem was introduced by Wolff \cite{wolff} for the purpose of simplifying the exposition of his hairbrush argument from \cite{wolff1}. Subsequently, it took on a life of its own, with finite field analogues of Kakeya and restriction problems considered by many authors \cite{BKT, mockTao, rogers, tao4d}. The Kakeya conjecture in finite fields was eventually resolved by Dvir \cite{dvir}. 

The goal of this article is to give a non-technical exposition of the planebrush argument of Katz and Zahl \cite{katzzahl}. In order to do so, we will work in the finite field setting so that we can focus on the geometrical ideas without the technicalities. As noted above, the finite field Kakeya problem has already been solved. The planebrush argument does not offer further progress in that direction, except for some differences between Dvir's assumptions and ours (see Theorem \ref{thm-pb4}). However, we believe that there is value in giving an accessible exposition of the geometric ideas involved, as they may eventually become part of the solution of the Euclidean Kakeya problem in higher dimensions.

The planebrush argument builds on Wolff's hairbrush argument from 1995 \cite{wolff1}, which gave a lower bound of $(n+2)/2$ on the Hausdorff dimension of Kakeya sets in $\RR^n$. Roughly speaking\footnote{
For the purpose of this rough sketch, we ignore the distinction between infinite lines and finite line segments.
}, the argument then involves finding a line in the Kakeya set with many other lines incident to it (the hairbrush configuration), and lower-bounding the size of the Kakeya set by the size of the hairbrush.

In dimensions 4 and higher, one may try to upgrade the hairbrush to a planebrush, an object where the stem (the line incident to many lines) is replaced by a 2-plane intersected by many of the lines of the Kakeya set. This approach does yield some success in four dimensions, improving on the hairbrush lower bound $(4+2)/2=3$. In \cite{katzzahl}, Katz and Zahl used a planebrush argument to prove that Kakeya sets in $\RR^4$ have Hausdorff dimension at least $3.059$; they further extended it to prove a stronger statement known as the {\em Kakeya maximal function conjecture} for the same range of dimensions. 

Following an idea first introduced by Bourgain and Guth \cite{bourgainguth} in the context of the restriction conjecture, the Katz-Zahl proof is based on considering two extremal cases: the {\em plany case} and the {\em trilinear case}. In the plany case, a typical point $x$ of the Kakeya set has the property that most of the lines passing through $x$ are contained in a common 2-plane (depending on $x$); in the trilinear case, the majority of intersecting triples of lines point in linearly independent directions. The planebrush method works well in the plany case, yielding a bound of $10/3$. In the trilinear case, we have a trilinear Kakeya bound of $3.25$, due to Guth and Zahl \cite{guthzahl}. Interpolating between these two results, Katz and Zahl obtained a bound of $3.059$ for the general case. Note that this is less that $\min(10/3,3.25)$; the additional losses come from having to consider the intermediate cases that are neither fully plany nor fully trilinear. For a special class of Kakeya sets in $\mathbb{R}^4$ called ``sticky Kakeya sets,'' Rai Choudhuri \cite{RaiCh} used a more efficient interpolation to obtain the bound $3.25.$

In this paper, we prove the finite field equivalent of the $10/3$ dimension bound for plany Kakeya sets. The proof is based on the planebrush method, giving us an opportunity to provide an exposition of the corresponding result in the Euclidean case. To that end, we will not assume Dvir's result or use the polynomial method, focusing instead on purely geometric arguments that mimic the Euclidean situation.

\section{The main results}

Let $\FF_q$ be the finite field of size $q$. 
A \emph{line} in $\FF_q^n$ is a one-dimensional affine subspace of $\FF_q^n$. Any line may be written as $\{a+\lambda v:\lambda\in \FF_q\}$, where $a\in \FF_q^n$ is a point on the line and $v\in\FF_q^n\setminus\{0\}$ is its direction vector. Replacing $v$ by any nonzero multiple of $v$ does not change the line; thus we may identify the
set of directions in $\FF_q^n$ with $\mathbb{P}\FF_q^n$, the projective space in $\FF_q^n$. It is easy to see that a line has $q$ points, and that any two lines are disjoint, identical, or intersect in exactly one point. 

\medskip
\begin{defi}
Let $n\geq 2$. A \textit{Kakeya set in }$\FF_q^n$ is a set $K\subset \FF_q^n$ which contains a line in every direction in $\mathbb{P}\FF_q^n$.
\end{defi}

In the Euclidean setting, we are interested in estimates of the form $\dim(K)\geq \alpha$ for $0<\alpha\leq n$, where $K$ is a Kakeya set and $\dim$ stands for some variant of fractal dimension (such as Hausdorff dimension or Minkowski dimension). The analogous finite field estimate is 
\begin{equation}\label{kak-fifi}
|K|\geq C q^\alpha,
\end{equation}
where $C>0$ is a constant that may depend both on $\alpha$ and the ambient dimension $n$, but not on $q$. (For a fixed $q$ and any nonempty $K$, (\ref{kak-fifi}) always holds trivially if we only choose $C$ to be small enough. The point is to make (\ref{kak-fifi}) uniform in $q$.)

Dvir \cite{dvir} established \eqref{kak-fifi} for $\alpha=n$ and thus proved that, in this sense, Kakeya sets in $\FF_q^n$ have dimension $n$.

\begin{theo}\label{ffk}
{\bf (Finite field Kakeya conjecture.)} \cite{dvir}
Let $n\geq 2$. If $K\subset \FF_q^n$ is a Kakeya set, then $|K|\geq C_nq^n$. 
\end{theo}

Theorem \ref{ffk} was proved using the polynomial method, also introduced by Dvir in this context. Further improvements to the constant $C_n$ in Theorem \ref{ffk} were obtained in \cite{BukTi, DKSS}. This completely resolves the finite field version of the Kakeya conjecture.

In this article, we will not rely on Theorem \ref{ffk} or, more generally, on the polynomial method. Instead, we will follow a chain of geometric arguments borrowed from the Euclidean case and culminating in the planebrush estimate. 

Throughout the rest of this paper, we assume that $n\geq 2$ and that $q$ is a large enough prime ($q>600$ will suffice). All constants in the inequalities below may depend on $n$, but not on $q$. We use $A\gtrsim B$ or $B = O(A)$ to say that $A\geq CB$, where $C>0$ is a constant that may change from line to line, but is always independent of $q$. We also write $A\eqsim B$ to say that both $A\gtrsim B$ and $B\gtrsim A$ hold.

Before stating our main result, we recall the following elementary estimate based on an argument of Cord\'oba \cite{cordoba}. The argument does not use any geometrical information about lines except for the fact that two distinct lines in $\FF^n_q$ intersect in at most one point. This estimate is sufficient to resolve the Kakeya problem in 2 dimensions, but does not offer further improvements in higher dimensions. 

\begin{prop}\label{cordprop}
    Let $\mathcal{L}$ be a set of distinct lines in $\FF_q^n$ such that $|\mathcal{L}|\leq 2q$. Then
    \[
        \Big|\bigcup_{l\in\mathcal{L}}l\Big| \gtrsim |\mathcal{L}| q. 
    \]
    In particular, if $K$ is a Kakeya set in $\FF_q^n$, $n\geq 2,$ then we have $|K|\gtrsim q^2$. 

\end{prop}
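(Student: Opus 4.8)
The plan is to run Córdoba's $L^2$ argument (Cauchy–Schwarz on incidences) and then derive the Kakeya corollary by a bush/pigeonholing step. Write $P=\bigcup_{l\in\mathcal L}l$ and consider the multiplicity function $f(x)=\#\{l\in\mathcal L: x\in l\}$, so that $\sum_{x\in P}f(x)=\sum_{l\in\mathcal L}|l|=q|\mathcal L|$. By Cauchy–Schwarz,
\[
(q|\mathcal L|)^2=\Big(\sum_{x\in P}f(x)\Big)^2\leq |P|\sum_{x\in P}f(x)^2,
\]
so it suffices to show $\sum_x f(x)^2\lesssim q|\mathcal L|$. Now expand $\sum_x f(x)^2=\sum_{l,l'\in\mathcal L}\#(l\cap l')$. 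The diagonal terms contribute exactly $q|\mathcal L|$. For the off-diagonal terms, the key geometric input — the only one Córdoba's argument uses — is that two distinct lines meet in at most one point, so $\sum_{l\neq l'}\#(l\cap l')\leq |\mathcal L|(|\mathcal L|-1)\leq |\mathcal L|^2$. Using the hypothesis $|\mathcal L|\leq 2q$ gives $|\mathcal L|^2\leq 2q|\mathcal L|$, hence $\sum_x f(x)^2\leq 3q|\mathcal L|$. Plugging back in yields $|P|\geq (q|\mathcal L|)^2/(3q|\mathcal L|)=\tfrac13|\mathcal L|q$, which is the claimed bound.

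For the Kakeya corollary, let $K\subset\FF_q^n$ with $n\geq 2$. Since $\mathbb P\FF_q^n$ has $\eqsim q^{n-1}\geq q$ directions, we may pick a subset $\mathcal L$ of the lines in $K$, one per direction, of size exactly $\lfloor q\rfloor$ (or any convenient value $\eqsim q$); these lines are distinct since they point in distinct directions, and $|\mathcal L|\leq 2q$, so the Proposition applies and gives $|K|\geq \big|\bigcup_{l\in\mathcal L}l\big|\gtrsim |\mathcal L|q\gtrsim q^2$.

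I do not anticipate a genuine obstacle here: the argument is a clean two-line application of Cauchy–Schwarz, and the restriction $|\mathcal L|\leq 2q$ is precisely what is needed to dominate the off-diagonal incidence sum by the diagonal one. The only point requiring a little care is bookkeeping the constants (the ``$3$'' above, and whether one wants $|\mathcal L|$ exactly $q$ or merely $\eqsim q$ in the corollary), and noting that one must actually have at least a few distinct directions available — which is automatic once $q$ is large, as assumed in the paper. If one wanted to avoid even the mild hypothesis $|\mathcal L|\leq 2q$, one could instead restrict to a subfamily of that size before running the estimate, but as stated the hypothesis is already built in.
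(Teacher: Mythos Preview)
Your proof is correct. The paper, however, takes a slightly different route: rather than Cauchy--Schwarz on the incidence function, it proves by induction the Bonferroni-type inequality
\[
\Big|\bigcup_{i=1}^N A_i\Big|\geq \sum_{i=1}^N |A_i| - \frac{N(N-1)}{2}
\]
for arbitrary sets with $|A_i\cap A_j|\leq 1$ (Lemma~\ref{cordlemma}), and then specializes to $A_i=l_i$. Both arguments use only that two distinct lines meet in at most one point, and both give $|P|\geq \tfrac12|\mathcal L|q$ (the paper) or $\tfrac13|\mathcal L|q$ (yours). The advantage of the paper's formulation is that it applies immediately to sets of the form $l_i\cap X$ rather than full lines, which is exactly what is needed for the ``primed'' version (Proposition~\ref{cordprop}$'$) used inside the hairbrush and planebrush arguments; your $L^2$ argument would need a small modification (replacing $q$ by a lower bound on $|l\cap X|$) to cover that case. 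One cosmetic remark: $q$ is already a prime integer, so writing $\lfloor q\rfloor$ in the corollary is unnecessary---the paper simply notes that any Kakeya set contains $q+1$ lines in distinct directions.
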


The last conclusion follows since every Kakeya set in $\FF_q^n$, $n\geq 2$, contains a set of $q+1$ distinct lines. 

Next, we recall an estimate that follows from Wolff's hairbrush argument \cite{wolff1}. This gives a stronger result for Kakeya sets in $\mathbb{F}_q^n$ when $n\geq 3$. 

\begin{prop}\label{hbprop0}
Let $\mathcal{L}$ be a set of distinct lines in $\FF_q^n$ such that at most $2q$ lines in $\mathcal{L}$ are contained in a common 2-plane (this will hold, for example, if the lines point in different directions) and $|\mathcal{L}|\leq 3q^2$. Then
    \[
        \Big|\bigcup_{l\in\mathcal{L}}l\Big| \gtrsim |\mathcal{L}| q^{1/2}.
    \]
In particular, if $K$ is a Kakeya set in $\FF_q^3$, then $|K|\gtrsim q^{5/2}$. 
\end{prop}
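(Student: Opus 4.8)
The plan is to run Wolff's hairbrush argument \cite{wolff1} directly in $\FF_q^n$, using only that two distinct lines meet in at most one point, together with the hypothesis bounding how many lines of $\mathcal{L}$ lie in a common $2$-plane (so no polynomial method). Write $L=|\mathcal{L}|$, $N=\Big|\bigcup_{\ell\in\mathcal{L}}\ell\Big|$, and for $x\in\FF_q^n$ put $m(x)=\#\{\ell\in\mathcal{L}:x\in\ell\}$. First I would dispose of two easy ranges. If $L\le 2q$, Proposition~\ref{cordprop} already gives $N\gtrsim Lq\ge Lq^{1/2}$, so assume $L>2q$. If $N\ge Lq/2$, then since $q>600$ we get $N\ge Lq/2\ge Lq^{1/2}$, so assume $N<Lq/2$. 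It remains to prove $N\gtrsim Lq^{1/2}$ when $2q<L\le 3q^2$ and $N<Lq/2$.

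The next step is an incidence/pigeonhole argument that selects the stem of the hairbrush. Counting point–line incidences gives $\sum_x m(x)=Lq$, so Cauchy–Schwarz yields $\sum_x m(x)^2\ge (Lq)^2/N$. Since distinct lines meet in at most one point, $\sum_x m(x)(m(x)-1)$ equals the number $P$ of ordered pairs of distinct intersecting lines of $\mathcal{L}$; hence $P=\sum_x m(x)^2-Lq\ge (Lq)^2/N-Lq\ge \tfrac12(Lq)^2/N$, the last step using $N<Lq/2$. As $P=\sum_{\ell_0\in\mathcal{L}}\#\{\ell\in\mathcal{L}:\ell\ne\ell_0,\ \ell\cap\ell_0\ne\emptyset\}$, pigeonholing over $\ell_0$ produces a line $\ell_0\in\mathcal{L}$ that is met by a family $\mathcal{F}\subset\mathcal{L}$ of $H_0\gtrsim Lq^2/N$ other lines.

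The geometric heart is the claim that the hairbrush $\bigcup_{\ell\in\mathcal{F}}\ell$ has $\gtrsim H_0 q$ points. Each leaf $\ell\in\mathcal{F}$ meets $\ell_0$ in exactly one point and hence spans with $\ell_0$ a unique $2$-plane $\pi(\ell)$; grouping leaves by $\pi(\ell)$ partitions $\mathcal{F}$ into classes $\mathcal{F}_\pi$ with $n_\pi:=|\mathcal{F}_\pi|$ and $\sum_\pi n_\pi=H_0$, and the hypothesis forces $n_\pi\le 2q-1$ because $\{\ell_0\}\cup\mathcal{F}_\pi$ consists of distinct lines of $\mathcal{L}$ all lying in $\pi$. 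Inside a single plane $\pi$ one gets $\big|(\bigcup_{\ell\in\mathcal{F}_\pi}\ell)\setminus\ell_0\big|\gtrsim n_\pi q$ by an elementary two–dimensional estimate: for $n_\pi\le q$ this is inclusion–exclusion (each leaf has $q-1$ points off $\ell_0$, and two distinct leaves overlap in at most one point), and for $q<n_\pi\le 2q$ it follows from the same Cauchy–Schwarz bound applied inside $\pi$, which forces $\bigcup_{\ell\in\mathcal{F}_\pi}\ell$ to cover $\gtrsim q^2$ points of the plane. Finally, two distinct $2$-planes through $\ell_0$ intersect exactly in $\ell_0$, so the sets $(\bigcup_{\ell\in\mathcal{F}_\pi}\ell)\setminus\ell_0$ are pairwise disjoint and all contained in $\bigcup_{\ell\in\mathcal{L}}\ell$; summing over the active planes gives $N\ge\sum_\pi\big|(\bigcup_{\ell\in\mathcal{F}_\pi}\ell)\setminus\ell_0\big|\gtrsim\sum_\pi n_\pi q=H_0 q$.

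Combining the two lower bounds for $N$ gives $N\gtrsim H_0 q\gtrsim Lq^3/N$, hence $N^2\gtrsim Lq^3$ and $N\gtrsim L^{1/2}q^{3/2}$; since $L\le 3q^2$ we have $q\ge L^{1/2}/\sqrt3$, so $N\gtrsim L^{1/2}q^{3/2}\gtrsim Lq^{1/2}$, which is the claimed bound (this is the one place the hypothesis $L\le 3q^2$ is used). For the last assertion, a Kakeya set $K\subset\FF_q^3$ contains one line in each of the $|\mathbb{P}\FF_q^3|=q^2+q+1$ directions; this family $\mathcal{L}$ satisfies $|\mathcal{L}|\eqsim q^2\le 3q^2$, and since a $2$-plane in $\FF_q^3$ carries only $q+1\le 2q$ directions, at most $2q$ lines of $\mathcal{L}$ lie in any common $2$-plane, so $|K|\gtrsim |\mathcal{L}|\,q^{1/2}\gtrsim q^{5/2}$. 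I expect the main obstacle to be the third step: one must verify that the plane hypothesis genuinely prevents the hairbrush from collapsing into a bounded number of $2$-planes, and that the per–plane contributions $\gtrsim n_\pi q$ really add up rather than overlap --- which is exactly where ``distinct $2$-planes through $\ell_0$ meet only in $\ell_0$'' enters --- while the two–dimensional estimate needs a small case split in $n_\pi$ so that no loss occurs when a plane contains only a few leaves.
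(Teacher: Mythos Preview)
Your argument is correct and follows essentially the same route as the paper's: a Cauchy--Schwarz/pigeonhole step to find a high-multiplicity stem $\ell_0$, a foliation of $\FF_q^n$ into $2$-planes through $\ell_0$, a Cord\'oba-type bound in each plane using the $2q$-per-plane hypothesis, and the final rearrangement $N\gtrsim L^{1/2}q^{3/2}\gtrsim Lq^{1/2}$ via $L\le 3q^2$. The only cosmetic differences are that the paper packages the argument as a proof of the slightly more general Proposition~\ref{hbprop0}$'$ (where lines are only required to meet a set $X$ in $\ge q/200$ points) and handles your ``easy ranges'' as its Case~1, while your explicit case split on $n_\pi$ is absorbed there into a direct appeal to Proposition~\ref{cordprop}$'$.
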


To prove Proposition \ref{hbprop0}, we foliate $\FF^n_q$ into 2-planes and apply Proposition \ref{cordprop} in each plane. The statement of Proposition \ref{hbprop0} is tailored to our intended application in 4 dimensions, but an almost identical argument shows that Kakeya sets in $\FF_q^n$ have size $\gtrsim q^{(n+2)/2}$ (see \cite{wolff} for the details).

Following naively the pattern in Propositions \ref{cordprop} and \ref{hbprop0}, one might expect further improvements for families of lines $\mathcal{L}$ in dimensions $n\geq 4$ such that, for any $2\leq d\leq n$, at most $C_d q^{d-1}$ lines of $\mathcal{L}$ are contained in a common $d$-dimensional affine subspace of $\FF_q^n$. Such assumptions are known in the literature as {\em Wolff axioms}. However, this turns out to be already false for $n=4$. The example in \cite[Proposition 1.3]{tao4d} yields a family of lines in $\FF_q^4$ such that $|\mathcal{L}| \eqsim q^3$ and $\mathcal{L}$ obeys the Wolff axioms, but $|\bigcup_{l\in \mathcal{L}}l|\eqsim q^3$. To prove better bounds in 4 dimensions, we must either use the property that all lines point in different directions, or else we must make additional assumptions on how the lines are distributed. We make such an assumption now.

\begin{defi}
A set of lines $\mathcal{L}$ is said to be \emph{plany} if for every $x\in \bigcup_{l\in \mathcal{L}} l$, there exists a 2-plane $\Pi_x$ such that all the lines from $\mathcal{L}$ passing through $x$ are contained in $\Pi_x$. Furthermore, a Kakeya set $K=\bigcup_{l\in \mathcal{L}} l$ is said to be plany if its associated set of lines is plany.
\end{defi}

Planiness is a property of Kakeya sets that has been studied in the literature \cite{guth2016, KLT}. Our goal is to prove the following theorem, based on a planebrush argument.

\begin{theo}\label{thm-pb4}
Let $\mathcal{L}$ be a plany set of distinct lines in $\FF_q^n$ such that at most $2q$ of the lines are contained in a common 2-plane, at most $3q^2$ of the lines are contained in a common 3-plane, and $|\mathcal{L}|\leq 4q^3$. In particular, this holds if all lines point in different directions. Then 
\begin{equation}\label{thm-pb4MainInequality}
    \Big| \bigcup_{l\in \mathcal{L}}l\Big|\gtrsim |\mathcal{L}|q^{1/3}.
\end{equation}
Therefore, if $K\subset \FF_q^4$ is a plany Kakeya set, then $|K|\gtrsim q^{10/3}$.
\end{theo}


\section{Cord\'oba's argument}

We prove a slightly more general and technical variant of Proposition \ref{cordprop} which will be useful in our arguments below. The proof is similar to that of \cite[Proposition 3.1]{guthbook}.

\begin{lem}\label{cordlemma}
    Let $A_1, A_2,\ldots A_N$ be sets with the property that $|A_i\cap A_j|\leq 1$ for any pair $i\neq j$. Then
        \begin{equation}\label{e-Ai}
        \Big|\bigcup_{i=1}^N A_i\Big|\geq \sum_{i=1}^N |A_i| -\frac{N(N-1)}{2}.
    \end{equation}
    In particular, if $\min_i|A_i|\geq CN$ for some $C\geq 1$, then 
    \begin{equation}\label{consequenceOfE-Ai}
    \Big|\bigcup_{i=1}^N A_i\Big|\geq \big(1-(2C)^{-1}\big) \sum_{i=1}^N |A_i|.
    \end{equation}
\end{lem}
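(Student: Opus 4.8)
The plan is to prove the inclusion-exclusion-type bound \eqref{e-Ai} by a direct double-counting argument, and then derive \eqref{consequenceOfE-Ai} as an elementary algebraic consequence. First I would set $U = \bigcup_{i=1}^N A_i$ and, for each point $x \in U$, let $m(x) = |\{i : x \in A_i\}|$ denote its multiplicity. Summing over points gives $\sum_{x \in U} m(x) = \sum_{i=1}^N |A_i|$. The key observation is that a point $x$ with multiplicity $m(x)$ is counted $m(x)$ times in $\sum_i |A_i|$ but only once in $|U|$, so the "overcount" is $\sum_{x\in U}(m(x)-1) = \sum_i |A_i| - |U|$. I would then bound this overcount from above using the hypothesis $|A_i \cap A_j|\le 1$: the number of pairs $(i,j)$ with $i<j$ and $x \in A_i \cap A_j$ is $\binom{m(x)}{2}$, and summing over $x$ counts each unordered pair $\{i,j\}$ at most $|A_i\cap A_j|\le 1$ times, so $\sum_{x\in U}\binom{m(x)}{2} \le \binom{N}{2} = \frac{N(N-1)}{2}$. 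Since $\binom{m}{2}\ge m-1$ for every integer $m\ge 1$ (indeed $m-1 \le \binom{m}{2}$ holds as $\binom{m}{2}-(m-1) = \frac{(m-1)(m-2)}{2}\ge 0$), we get $\sum_i|A_i| - |U| = \sum_{x\in U}(m(x)-1) \le \sum_{x\in U}\binom{m(x)}{2}\le \frac{N(N-1)}{2}$, which rearranges to \eqref{e-Ai}.

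For the second part, I would simply substitute the hypothesis $\min_i |A_i| \ge CN$ into \eqref{e-Ai}. We have $\frac{N(N-1)}{2} \le \frac{N^2}{2}$, and $N \le \frac{1}{C}\min_i|A_i| \le \frac{1}{CN}\sum_i|A_i|$, so $\frac{N^2}{2} \le \frac{N}{2C}\cdot\frac{1}{N}\sum_i|A_i| \cdot \frac{1}{1}$; more cleanly, $\frac{N(N-1)}{2} \le \frac{N^2}{2} = \frac{N}{2C}\cdot\frac{CN}{1}\cdot\frac1N \le \frac{1}{2C}\sum_i |A_i|$, using $CN \le \min_i|A_i| \le \frac1N\sum_i|A_i|$ in the last step. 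Plugging this into \eqref{e-Ai} yields $|U| \ge \sum_i|A_i| - \frac{1}{2C}\sum_i|A_i| = (1-(2C)^{-1})\sum_i|A_i|$, which is \eqref{consequenceOfE-Ai}.

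There is no serious obstacle here; the only point requiring a little care is the combinatorial inequality $m-1 \le \binom{m}{2}$ for integers $m\ge 1$ (it would fail for non-integer $m$ between $1$ and $2$), and making sure the pair-counting genuinely uses $|A_i\cap A_j|\le 1$ rather than just $|A_i\cap A_j|<\infty$. Everything else is bookkeeping. I would also remark in passing how this specializes to Proposition \ref{cordprop}: taking the $A_i$ to be the point sets of $|\mathcal{L}|\le 2q$ distinct lines in $\FF_q^n$, each of size exactly $q$, the hypothesis $|A_i\cap A_j|\le 1$ holds since two distinct lines meet in at most one point, and $\min_i|A_i| = q \ge \frac12|\mathcal{L}| = \frac12 N$, so \eqref{consequenceOfE-Ai} with $C = 1/2\cdot(q/N)\gtrsim 1$ — more directly, \eqref{e-Ai} gives $|\bigcup_{l}l| \ge |\mathcal{L}|q - \frac{|\mathcal{L}|^2}{2} \ge |\mathcal{L}|q - |\mathcal{L}|q = $ — here one should instead note $|\mathcal{L}|\le 2q$ forces $\frac{|\mathcal{L}|(|\mathcal{L}|-1)}{2}\le \frac{|\mathcal{L}|\cdot 2q}{2} = |\mathcal{L}|q$ is too weak, so one uses $|\mathcal{L}|\le 2q$ more carefully, e.g. restricting to a subfamily of size $\le q$ when needed, to conclude $|\bigcup_l l|\gtrsim |\mathcal{L}|q$.
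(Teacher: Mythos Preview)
Your argument for \eqref{e-Ai} is correct but differs from the paper's: the paper proves it by induction on $N$ (peeling off $A_N$ and using $|A_i\cap A_N|\leq 1$ at each step), whereas you double-count via the multiplicity function $m(x)$ and the inequality $m(x)-1\leq\binom{m(x)}{2}$. Both approaches are standard and short; yours has the minor advantage of making the role of the pairwise-intersection hypothesis more transparent (it is exactly the bound $\sum_x\binom{m(x)}{2}\leq\binom{N}{2}$), while the paper's induction avoids the need for the auxiliary inequality $m-1\leq\binom{m}{2}$. Your derivation of \eqref{consequenceOfE-Ai} is essentially identical to the paper's: both use $\frac{N(N-1)}{2}\leq\frac{N^2}{2}\leq\frac{1}{2C}\sum_i|A_i|$, the last step coming from $CN\leq\min_i|A_i|\leq\frac{1}{N}\sum_i|A_i|$.

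The final paragraph about specializing to Proposition~\ref{cordprop} is not part of the lemma and should be dropped or rewritten --- as you yourself noticed mid-sentence, applying \eqref{e-Ai} directly with $|\mathcal{L}|\leq 2q$ only gives $|\mathcal{L}|q-|\mathcal{L}|q=0$, which is useless. The paper handles this separately (in Proposition~\ref{cordprop}$'$) by first passing to a subfamily of size $N=\min(|\mathcal{L}|,\lfloor q/300\rfloor)$ so that \eqref{consequenceOfE-Ai} applies with $C=1$.
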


\begin{proof}
    We prove (\ref{e-Ai}) by induction in $N$. For $N=1$, the statement is trivial. Assuming that $N\geq 2$ and (\ref{e-Ai}) holds for $N-1$ sets, we write
    \begin{align*}
        \Big|\bigcup_{i=1}^N A_i\Big|&\geq \Big|\bigcup_{i=1}^{N-1} A_i\Big|        
        + |A_N| - \sum_{i=1}^{N-1} |A_i\cap A_N|
       \\
       &\geq \sum_{i=1}^{N-1} |A_i| -\frac{(N-1)(N-2)}{2}      + |A_N| - (N-1)
       \\
       & \geq \sum_{i=1}^N |A_i| -\frac{N(N-1)}{2},     
    \end{align*}
as claimed. To obtain \eqref{consequenceOfE-Ai}, we compute
$$
\frac{N(N-1)}{2}\leq \frac{N^2}{2}\leq \frac{\sum |A_i|}{2C}.\qedhere
$$

\end{proof}

We will use Lemma \ref{cordlemma} to prove the following mild generalization of Proposition \ref{cordprop}
\begin{cordpropPrime}
    Let $\mathcal{L}$ be a set of distinct lines in $\FF_q^n$ such that $|\mathcal{L}|\leq 2q$. Let $X\subset\FF_q^n$ be a set such that for for every $l\in \mathcal{L}$ we have $|l\cap X|\geq q/300$. Then $|X|\gtrsim |\mathcal{L}| q$.
\end{cordpropPrime}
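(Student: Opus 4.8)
The plan is to reduce Proposition~\ref{cordprop}$'$ directly to Lemma~\ref{cordlemma} by applying that lemma to the collection of sets $A_l := l \cap X$, indexed by $l \in \mathcal{L}$. First I would note that the key incidence hypothesis of Lemma~\ref{cordlemma} is satisfied: since any two distinct lines in $\FF_q^n$ meet in at most one point, we have $|A_i \cap A_j| = |(l_i \cap X) \cap (l_j \cap X)| \leq |l_i \cap l_j| \leq 1$ for $i \neq j$. Second, the hypothesis $|l \cap X| \geq q/300$ gives $\min_l |A_l| \geq q/300$, and the bound $|\mathcal{L}| \leq 2q$ gives $N := |\mathcal{L}| \leq 2q$, so $\min_l |A_l| \geq q/300 \geq N/600 = N/600$. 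Thus the ``in particular'' clause of Lemma~\ref{cordlemma} applies with $C = 1/600$ --- except that here $C < 1$, so I cannot invoke \eqref{consequenceOfE-Ai} verbatim; instead I would go back to the raw inequality \eqref{e-Ai}.

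Concretely, from \eqref{e-Ai},
\[
    |X| \geq \Big|\bigcup_{l \in \mathcal{L}} A_l\Big| \geq \sum_{l \in \mathcal{L}} |A_l| - \frac{N(N-1)}{2} \geq \frac{qN}{300} - \frac{N^2}{2}.
\]
Since $N \leq 2q$, we have $N^2/2 \leq qN$, which is far too lossy; so the honest move is to split into two cases according to the size of $N$ relative to $q$, or more simply to observe that we actually want $|X| \gtrsim qN$, and the term $qN/300$ is not enough to beat $N^2/2$ when $N$ is close to $2q$. The fix is to use a sharper lower bound on $|X|$ via a Cauchy--Schwarz / popularity argument rather than the crude union bound: count incidences $I = \sum_{l} |l \cap X| \geq qN/300$, and for each point $x \in X$ let $\mu(x)$ be the number of lines of $\mathcal{L}$ through $x$, so $I = \sum_{x \in X} \mu(x)$ and $\sum_{x \in X} \mu(x)(\mu(x)-1) \leq N(N-1)$ (each ordered pair of distinct lines contributes at most one point). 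Then by Cauchy--Schwarz,
\[
    I^2 = \Big(\sum_{x \in X} \mu(x)\Big)^2 \leq |X| \sum_{x \in X} \mu(x)^2 \leq |X|\big(N(N-1) + I\big) \leq |X|\big(N^2 + I\big),
\]
and since $I \geq qN/300 \gtrsim N^2$ is false in general but $I \leq qN$ and $N \leq 2q$ give $N^2 \leq 2qN \leq (600/1)\cdot I \cdot \text{(something)}$... the clean bookkeeping is: $N^2 \leq 2q N \leq 600 \cdot (qN/300) \leq 600 I$, hence $N^2 + I \leq 601 I$, so $|X| \geq I^2 / (601 I) = I/601 \geq qN/(300\cdot 601) \gtrsim |\mathcal{L}| q$.

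The only real subtlety --- and the step I expect to require the most care --- is making sure the constants line up so that the quadratic error term $N(N-1)$ is genuinely dominated by the incidence count $I$; this is exactly where the numerical hypotheses $|\mathcal{L}| \leq 2q$ and $|l \cap X| \geq q/300$ are calibrated, and it is why the Cauchy--Schwarz formulation is cleaner than the bare union bound from \eqref{e-Ai}, since it turns the comparison into $N^2 \lesssim I$ rather than $N^2 \lesssim I$ with a worse implied constant. Everything else is a direct substitution into Lemma~\ref{cordlemma} (or its proof), and the final ``in particular'' consequence for Kakeya sets is immediate by taking $X = \bigcup_{l \in \mathcal{L}} l$ and noting $|l \cap X| = q \geq q/300$.
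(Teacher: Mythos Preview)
Your Cauchy--Schwarz argument is correct: the chain $I^2 \leq |X|(N^2+I)$, $N^2 \leq 2qN \leq 600I$, hence $|X|\geq I/601 \gtrsim |\mathcal{L}|q$, is valid and the constants do line up. So the proof is fine.

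The paper's route is different and shorter. Rather than switching to an $L^2$ incidence count when the raw union bound \eqref{e-Ai} fails, the paper simply \emph{truncates} the collection: set $N=\min(|\mathcal{L}|,\lfloor q/300\rfloor)$ and apply Lemma~\ref{cordlemma} to only $N$ of the sets $A_i=l_i\cap X$. Now $\min_i|A_i|\geq q/300\geq N$, so \eqref{consequenceOfE-Ai} applies with $C=1$, giving $|X|\geq \tfrac12\sum_{i=1}^N|A_i|\geq Nq/600$. If $|\mathcal{L}|\leq q/300$ this is already $\gtrsim |\mathcal{L}|q$; if $|\mathcal{L}|>q/300$ then $N\eqsim q$ and $Nq\eqsim q^2\gtrsim |\mathcal{L}|q$ since $|\mathcal{L}|\leq 2q$. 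This sidesteps entirely the obstacle you ran into (that $C<1$ when using all the lines), at the cost of throwing away most of $\mathcal{L}$ when $|\mathcal{L}|$ is near $2q$. Your approach keeps all the lines and instead upgrades the union bound to a second-moment estimate; this is the classical $L^2$ form of C\'ordoba's argument and is perhaps more robust, but here it is more work than the one-line truncation.
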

\begin{proof}
Let $N=\min(|\mathcal{L}|, \lfloor q/300\rfloor )$, and let $l_1,\dots,l_N\in \mathcal{L}$ be a distinct lines. Apply Lemma \ref{cordlemma} to the sets $A_i = l_i\cap X$. The result follows from \eqref{consequenceOfE-Ai} with $C=1$. 
\end{proof}

\section{Wolff's hairbrush argument}\label{hbsec}

To prove Theorem \ref{thm-pb4} we will split the ambient space $\FF_q^n$ into an almost disjoint union of affine 3-planes, and then apply a variant of Wolff's hairbrush argument (Proposition \ref{hbprop0}) inside each 3-plane. To execute this strategy, we will need the following mild generalization of Proposition \ref{hbprop0}.

\begin{hbpropPrime}
Let $\mathcal{L}$ be a set of distinct lines in $\FF_q^n$ such that at most $2q$ lines in $\mathcal{L}$ are contained in a common 2-plane (this will hold, for example, if the lines point in different directions) and $|\mathcal{L}|\leq 3q^2$. Let $X\subset \bigcup_{l\in \mathcal{L}}l$ be a set with the property that for each $l\in\mathcal{L}$, we have $|l\cap X'|\geq q/200$. Then
\begin{equation}\label{lowerBdOnXhbpropPrime}
|X|\gtrsim |\mathcal{L}| q^{1/2}.
\end{equation}
\end{hbpropPrime}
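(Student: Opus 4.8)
The plan is to mimic Wolff's hairbrush argument, but to track the set $X$ rather than the full union of lines. The key observation behind the hairbrush bound is that if a line $l_0$ is contained in many lines of $\mathcal{L}$ (equivalently, many lines of $\mathcal{L}$ meet $l_0$), then these incident lines are spread out among the roughly $q$ distinct $2$-planes containing $l_0$, because the plany-type hypothesis (here the ``at most $2q$ lines in a common $2$-plane'' condition) prevents too many of them from piling up in a single plane. Inside each such $2$-plane $\Pi$, the lines of $\mathcal{L}$ lying in $\Pi$ and meeting $l_0$ form a Cord\'oba configuration, so Proposition~\ref{cordprop}$'$ applied with the set $X\cap\Pi$ lower-bounds $|X\cap\Pi|$ by (number of lines in $\Pi$)$\,\cdot\,q$; summing over the $\eqsim q$ planes gives the desired $|\mathcal{L}|q^{1/2}$-type bound, after one checks that the total line count is spread among enough planes.

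First I would run a popularity/pigeonholing step. Define, for each line $l_0\in\mathcal{L}$, the hairbrush $\mathcal{H}(l_0)$ of lines in $\mathcal{L}$ that intersect $l_0$ at a point of $X$ (or rather at a point where $X$ is ``rich''). A double-counting argument on incidences between points of $X$ and pairs of lines shows that some line $l_0$ has $|\mathcal{H}(l_0)|\gtrsim |\mathcal{L}|/q^{1/2}$, or else the union is already large enough and we are done directly from Proposition~\ref{cordprop}$'$; this dichotomy is the standard ``either few lines through a typical point, in which case Cord\'oba wins, or a popular line exists'' split. Here the hypothesis $|l\cap X|\ge q/200$ for every $l$ guarantees that the incidence count is large enough to force such an $l_0$, and guarantees that along each incident line enough of $X$ survives to feed into Cord\'oba.

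Second, I would partition $\mathcal{H}(l_0)$ according to which $2$-plane through $l_0$ each line lies in. There are exactly $(q^n-q)/(q^2-q) \eqsim q^{n-2}$ planes containing $l_0$, but a line of $\mathcal{H}(l_0)$ meeting $l_0$ lies in a unique such plane (since it meets $l_0$ in one point, together with $l_0$ it spans a $2$-plane). The $2q$-in-a-common-$2$-plane hypothesis then bounds each class by $2q$, so $\mathcal{H}(l_0)$ meets at least $|\mathcal{H}(l_0)|/(2q)$ distinct planes. In each such plane $\Pi$, the lines of $\mathcal{H}(l_0)\cap\Pi$ number at most $2q$, so Proposition~\ref{cordprop}$'$ (with $X'\cap\Pi$, noting the richness hypothesis passes down) gives $|X\cap\Pi|\gtrsim |\mathcal{H}(l_0)\cap\Pi|\,q$. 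Since the planes through $l_0$ intersect pairwise only in $l_0$, the sets $X\cap(\Pi\setminus l_0)$ are disjoint, so summing over the planes yields $|X|\gtrsim |\mathcal{H}(l_0)|\,q \gtrsim |\mathcal{L}|q^{1/2}$.

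The main obstacle I anticipate is the bookkeeping in the dichotomy: making sure the constants line up so that the ``typical point is on few lines'' branch genuinely produces $|X|\gtrsim|\mathcal{L}|q^{1/2}$ via Cord\'oba (one needs $|\mathcal{L}|\le 3q^2$ precisely so that $|\mathcal{L}|/q$ is at most $\approx 3q\le q/300$-scale after the right normalization, or rather so the per-plane line count stays in the regime where \eqref{consequenceOfE-Ai} applies with a usable constant), and that the richness threshold $q/200$ for $X$ survives being intersected with each plane and each line. A secondary subtlety is that $X$ is only assumed to lie in $\bigcup l$ with the per-line lower bound, so in the incidence count one must weight points by how many rich lines pass through them and apply Cauchy--Schwarz carefully; I expect this to be the step where the exponent $1/2$ is actually produced and where the hypothesis $|\mathcal{L}|\le 3q^2$ is used to avoid overcounting.
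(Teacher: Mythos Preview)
Your proposal is correct and follows essentially the same route as the paper: use Cauchy--Schwarz on $\sum_{p\in X}\mu(p)^2$ to find a stem line $l_0$ with a large hairbrush, then foliate $\FF_q^n$ into $2$-planes through $l_0$ and apply Proposition~\ref{cordprop}$'$ in each plane, summing over the disjoint pieces $\pi\setminus l_0$. One small imprecision worth flagging: the ``easy'' branch of the dichotomy is not handled by Cord\'oba applied to the whole collection, but by arithmetic alone---the Cauchy--Schwarz step in the paper yields a hairbrush of size $\gtrsim q^2|\mathcal{L}|/|X|-q$, and the easy case is when the subtracted $q$ dominates (giving $|X|\gtrsim q|\mathcal{L}|$ immediately); in the main branch the plane foliation gives $|X|\gtrsim q\cdot q^2|\mathcal{L}|/|X|$, hence $|X|\gtrsim q^{3/2}|\mathcal{L}|^{1/2}$, and only at this final step is the hypothesis $|\mathcal{L}|\le 3q^2$ used to convert to $q^{1/2}|\mathcal{L}|$. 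Your claimed hairbrush size $|\mathcal{L}|/q^{1/2}$ is equivalent after assuming $|X|\lesssim |\mathcal{L}|q^{1/2}$ for contradiction, but the paper's bootstrapping formulation avoids that extra assumption.
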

\begin{proof}
    We will find a line (which we will call the stem) that has many other lines incident to it, i.e., a configuration that looks like a hairbrush (See Figure \ref{hairbrush}). 

    \begin{figure}[h]
        \centering
        \includegraphics[scale=0.4]{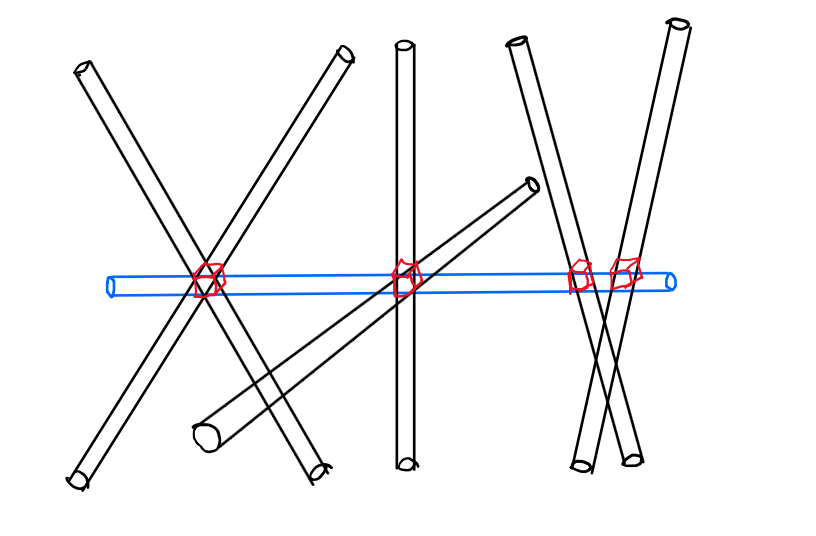}
        \caption[Hairbrush argument]{Hairbrush configuration. We use tubes instead of lines here for ease of illustration (note that working with tubes in the Euclidean setting introduces additional difficulties since tubes might intersect at small angles). Many lines are incident to a single ``stem'' line (shown in blue).}
        \label{hairbrush}
    \end{figure}

    Finding an appropriate stem requires some technical work, which we get to now. For $p\in X$, define the multiplicity function $\mu(p)=\sum_{l \in \mathcal{L}} \chi_{l}(p)$, i.e.,~$\mu(p)$ is the number of lines from $\mathcal{L}$ containing the point $p$. Define
    \[
    T=\{(l,l',p)\in \mathcal{L}\times \mathcal{L}\times X: p\in l \text{ and }p\in l'\}.
    \]
    We have
    \begin{align*}
    |T|= & \sum_{p\in X} \mu(p)^2 \geq \frac{\big(\sum_{p\in X} \mu(p)\big)^2}{|X|}\geq \frac{\big(\sum_{p\in X} \sum_{l \in \mathcal{L}} \chi_l(p)\big)^2}{|X|}\\
    & =  \frac{\sum_{l \in \mathcal{L}}\big(\sum_{p\in X}\chi_l(p)\big)^2}{|X|} 
    =\frac{\big(\sum_{l\in \mathcal{L}}|l\cap X|\big)^2}{|X|} \geq \frac{1}{200^2} \frac{q^2 |\mathcal{L}|^2}{|X|},
    \end{align*}
    where the first inequality used Cauchy-Schwarz. By pigeonholing, there exists a line $l_i$ that occurs as the first coordinate in at least $|T|/|\mathcal{L}|\geq (1/200^2)q^2|\mathcal{L}|/|X|$ many triples. We will have at most $q$ triples of the form $(l_i,l_i,p)$. The remaining triples will be of the form $(l_i,l_j,p)$ with $l_i\neq l_j$. Each triple of this form corresponds to a line incident to $l_i$. Therefore, the total number of lines incident to $l_i$ is at least $(1/200^2)q^2|\mathcal{L}|/|X|-q$. We have two cases now, depending on which one of these two terms is dominant.
    
    \medskip
    
    \underline{Case 1:} Suppose $q\geq \frac{1}{2} \frac{1}{200^2} \frac{q^2 |\mathcal{L}|}{|X|}$. Rearranging, we obtain $|X|\gtrsim |\mathcal{L}| q$ and hence \eqref{lowerBdOnXhbpropPrime}.
    
    \medskip

    \underline{Case 2:} Suppose $q\leq \frac{1}{2} \frac{1}{200^2} \frac{q^2 |\mathcal{L}|}{|X|}$. Then the number of lines incident to $l_i$ is at least $\frac{1}{2} \frac{1}{200^2} \frac{q^2 |\mathcal{L}|}{|X|}$. Thus, $l_i$ is the high multiplicity line that we may use as the stem of the hairbrush. Now we obtain a lower bound on the volume of the hairbrush with stem $l_i$. This will involve dividing $\FF_q^n$ into 2-planes that contain the line $l_i$ and analyzing the lines in each of these 2-planes. 
    
    More precisely, we have the foliation 
    \begin{equation}\label{f3foli}
        \FF_q^3=l_i\sqcup \Big(\bigsqcup_\pi (\pi \setminus l_i) \Big),
    \end{equation} 
    where the union is taken over the set $\pi$ of 2-planes containing $l_i$. Observe that if $l$ is a line incident to $l_i$, then $l$ is contained in exactly one plane from \eqref{f3foli}. We denote the number of lines of the hairbrush that are in $\pi$ as $H(\pi)$. We thus have
    \begin{equation}\label{hbeq2}
    \sum_\pi H(\pi)\geq \frac{1}{2} \frac{1}{200^2} \frac{q^2 |\mathcal{L}|}{|X|}.
    \end{equation}
    We will now establish a lower bound on the union of the lines (intersected with $X$) that intersect $l_i$ and are contained in the plane $\pi$. If $l_j\neq l_i$ is a line of the hairbrush and is contained in the plane $\pi$, then $|l_j\cap X\setminus l_i|\geq q/300$. Furthermore, if $l_j\neq l_j'$ are two distinct lines of this form, then the sets $l_j\cap X\setminus l_i$ and $l_j'\cap X\setminus l_i$ intersect in at most one point. We have $H(\pi)\leq 2q$ by hypothesis. Thus, we may apply Proposition \ref{cordprop}$'$ to these sets. We conclude that
    \begin{equation}\label{XinsidePi}
    \Big| \bigcup_{\substack{l\in \mathcal{L}\\ l\subset\pi}} (X\cap l) \backslash l_i\Big| \gtrsim H(\pi) q.
    \end{equation}
    The sets $\pi\setminus l_i$ are disjoint as per our foliation \eqref{f3foli}. Thus by \eqref{hbeq2} and \eqref{XinsidePi}, we have
    $$
    |X|\gtrsim \sum_\pi H(\pi) q \gtrsim \frac{q^3 |\mathcal{L}|}{|X|}.
    $$
    Simplifying, and using the hypothesis $|\mathcal{L}|\lesssim q^2$, we get
    $$
    |X|\gtrsim q^{3/2} |\mathcal{L}|^{1/2}\gtrsim q^{1/2} |\mathcal{L}|.\qedhere
    $$        
\end{proof}

\section {The planebrush argument}
\begin{proof}[Proof of Theorem \ref{thm-pb4}]
We will prove the result by strong induction on $|\mathcal{L}|$. The base case is trivial, so we assume that $|\mathcal{L}|>1$ and that the theorem holds for smaller values of $|\mathcal{L}|$. Define $X = \bigcup_{l\in\mathcal{L}}l$. For each $p\in X$ we define the multiplicity $\mu (p)=\sum_{l\in \mathcal{L}} \chi_l (p)$, that is, the number of lines from $\mathcal{L}$ containing $p$.
Note that the average multiplicity of a point in $X$ is 
$$
\frac{\sum_{p\in X} \mu(p)}{|X|}=\frac{\sum_{p\in X} \sum_{l\in \mathcal{L}} \chi_l (p)}{|X|}=\frac{\sum_{l\in \mathcal{L}} \sum_{p\in X}  \chi_l (p)}{|X|}=\frac{q|\mathcal{L}|}{|X|}.
$$ 
In the arguments that follow, it will be helpful if each point in $X$ has roughly average multiplicity. Define
$$
X'=\bigg\{ p\in X: \mu (p) \geq \frac{1}{100}\frac{q|\mathcal{L}|}{|X|}\bigg\}.
$$
We expect $X'$ to be a large subset of $X$ since we are only discarding those points that have a small fraction of the average multiplicity. To be more precise, we claim the following.

\medskip

\underline{Claim 1 (Multiplicity):} At least 99\% of the point-line pairs are retained when we shrink from the full Kakeya set $X$ to the subset $X'$. That is,

\begin{equation}\label{1}
    \frac{1}{|\mathcal{L}|}\sum_{l\in \mathcal{L}} \frac{|l \cap X'|}{q}\geq \frac{99}{100}.
\end{equation}

\medskip

We shall now prove this claim. Consider the sum
\begin{equation}\label{sumPinX}
\sum_{p\in X\setminus X'} \mu(p)=\sum_{p\in X\setminus X'} \sum_{l\in \mathcal{L}} \chi_l (p)=\sum_{l\in \mathcal{L}} \sum_{p\in X\setminus X'}  \chi_l (p)=\sum_{l\in \mathcal{L}} |l \cap (X\setminus X')|.
\end{equation}
On the other hand, we have 
\begin{equation}\label{pinXMinusXPrime}
\sum_{p\in X\setminus X'} \mu(p) \leq \sum_{p\in X\setminus X'} \frac{1}{100}\frac{q|\mathcal{L}|}{|X|}\leq \frac{1}{100}\frac{q|\mathcal{L}|}{|X|} |X\setminus X'|\leq \frac{q|\mathcal{L}|}{100}.
\end{equation}
Comparing \eqref{sumPinX} and \eqref{pinXMinusXPrime}, we have
$$
\frac{1}{|\mathcal{L}|}\sum_{l\in \mathcal{L}} \frac{|l \cap (X\setminus X')|}{q}\leq \frac{1}{100}.
$$
This says that, on average, each line loses less than $1\%$ of its points as we restrict from $X$ to $X'$. Since $|l \cap X'|=q-|l \cap (X\setminus X')|$, we can substitute and rearrange to get
\begin{equation*}
    \frac{1}{|\mathcal{L}|}\sum_{l\in \mathcal{L}} \frac{|l \cap X'|}{q}\geq \frac{99}{100},
\end{equation*}
which establishes Claim 1.

\medskip

The next step is to find a point that serves as the base of our planebrush. 

\medskip

\underline{Claim 2 (Planebrush base):} There exists a point $x_1\in X'$ such that at least half of the lines passing through $x_1$ still have at least half of their points in $X'$, i.e.
\begin{equation}\label{atLeastHalfLines}
|\{l\in\mathcal{L}\colon x_1\in l, |X'\cap l|\geq q/2\}|\geq \frac{1}{2} |\{l\in\mathcal{L}\colon x_1\in l\}|.
\end{equation}

\medskip

We establish this claim as follows. Suppose for contradiction that \eqref{atLeastHalfLines} fails for every $x\in X'$. Let us count the number of triples in the set $T=\{(x,p,l): x,p\in X', l\in \mathcal{L}, x,p\in l\}$. We have
\begin{align*}
    |T|&\leq \sum_{x\in X'} \Big[ \sum_{\substack{l\ni x\\ |l\cap X'|< q/2}}\frac{q}{2} + \sum_{\substack{l\ni x\\|l\cap X'|\geq q/2}} q\Big]\\
       &=\sum_{x\in X'} \Big[ \Big(|\{l:x\in l,|l\cap X'|< q/2\}|\Big)\frac{q}{2}+\Big(\mu(x)-|\{l:x\in l,|l\cap X'|< q/2\}|\Big)q\Big]\\
       &=\sum_{x\in X'} \Big[ \Big(\mu(x)-\frac{1}{2}|\{l:x\in l,|l\cap X'|< q/2\}|\Big)q\Big]\\
       &\leq \sum_{x\in X'} \Big[ \Big( \mu(x)-\frac{\mu(x)}{2\cdot 2}\Big)q\Big]\\
       &=\frac{3}{4} q \sum_{x\in X'} \mu(x)\\
       &\leq \frac{3}{4}q^2 |\mathcal{L}|.
\end{align*}
On the other hand, we have $|T|=\sum_{l\in \mathcal{L}} |l \cap X'|^2$. By Cauchy-Schwarz inequality and \eqref{1}, we get
$$
|T|=\sum_{l\in \mathcal{L}} |l \cap X'|^2\geq \frac{\big(\sum_{l\in \mathcal{L}} |l \cap X'|\big)^2}{|\mathcal{L}|}\geq \bigg(\frac{99}{100}\bigg)^2 q^2 |\mathcal{L}|.
$$
Hence we arrive at the required contradiction and this establishes Claim 2. Henceforth we will fix the base point $x_1$ of our planebrush. We will also refer to the set $\bigcup_{l\in \mathcal{L}, x_1\in l} l$ as the \emph{bush} centered at $x_1$. By our planiness assumption, this set is contained in a 2-plane $\Pi_{x_1}$.

\begin{figure}[h]
    \centering
        \centering
        \includegraphics[scale=0.4]{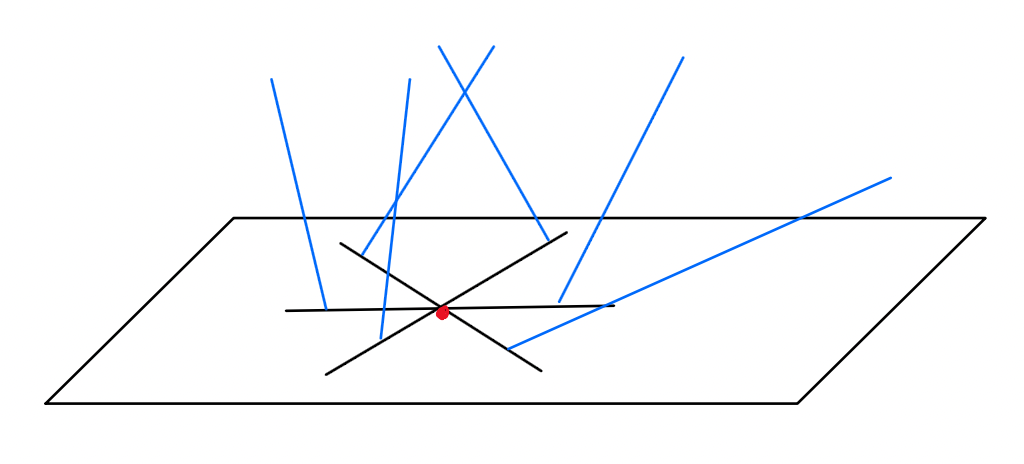} 
        
        \label{pbfig}
    \caption[Planebrush configuration]{Planebrush configuration. The red dot represents the base point $x_1$ and the blue lines represent some of the lines of the planebrush $\mathcal{L}_1$.}

\end{figure} 

Next we define $\mathcal{L}_1$ to be the set of lines $l\in \mathcal{L}$ which either intersect $\Pi_{x_1}$ or are parallel to $\Pi_{x_1}$ (the latter lines intersect $\Pi_{x_1}$ at infinity). This forms our planebrush configuration (see Figure \ref{pbfig}). Since $x_1$ satisfies \eqref{atLeastHalfLines}, we have
\begin{equation}\label{pbcount}
    |\mathcal{L}_1|\geq  10^{-5}\ \frac{q^3|\mathcal{L}|^2}{|X|^2}-2q^2.
\end{equation}

Let us elaborate on Inequality \eqref{pbcount}. 
Let $p_1, p_2,\ldots, p_N$ be the set of points of the bush centered at $x_1$ (excluding $x_1$ itself) that are in $X'$.  Since $x_1$ satisfies \eqref{atLeastHalfLines}, we have $N\geq 10^{-3}\ \frac{q|\mathcal{L}|}{|X|}q$. Let $\mathcal{L}_{p_i}\subset\mathcal{L}$ be the set of lines passing through $p_i$. We have $|\mathcal{L}_{p_i}|\geq 10^{-2}\ \frac{q|\mathcal{L}|}{|X|}$, and the first term in \eqref{pbcount} is a lower bound on the sum $\sum_{i=1}^N |\mathcal{L}_{p_1}|$. However, these sets $\mathcal{L}_{p_i}$ might not be disjoint. The second term fixes the potential double counting. Suppose $l\in \mathcal{L}_{p_i}\cap \mathcal{L}_{p_j}$. Since $p_i,p_j\in l$, we have that $l$ is contained in the plane $\Pi_{x_1}$ (see Figure \ref{doubcount} below). 

\begin{figure}[h]
    \centering
        \centering
        \includegraphics[scale=0.4]{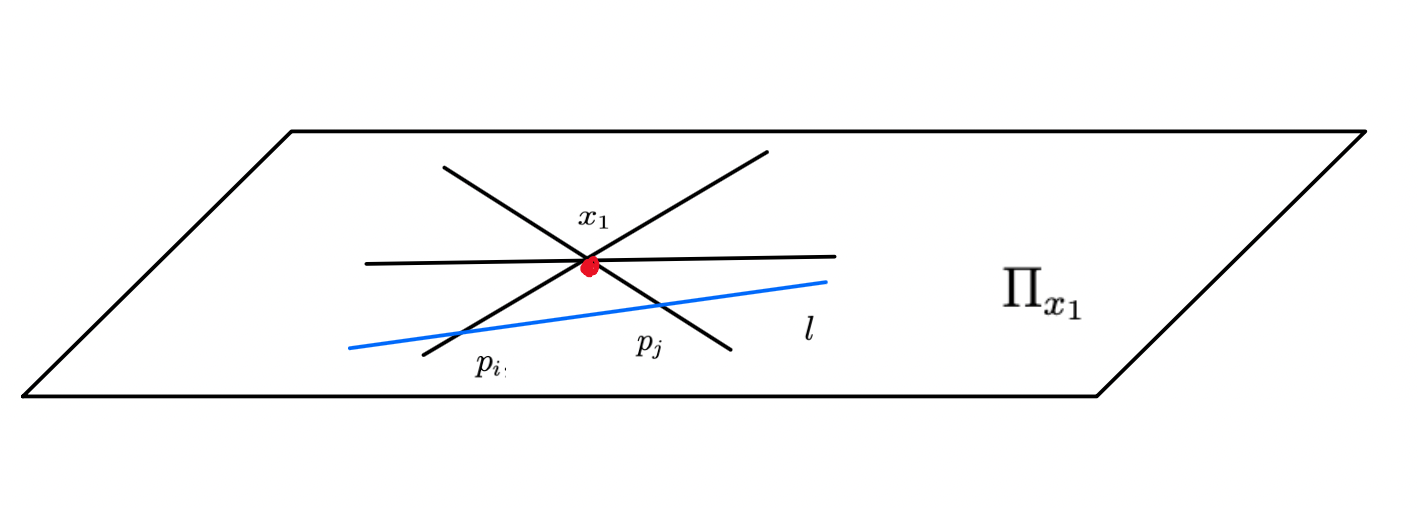} 
        \caption{Double counting. The blue line $l$ is being counted twice as it passes through both $p_i$ and $p_j$. This forces $l$ to be contained in $\Pi_{x_1}$.}
        \label{doubcount}
\end{figure} 

Let $\mathcal{L}_0\subset\mathcal{L}_1$ be the set of lines contained in the plane $\Pi_{x_1}$. By hypothesis $|\mathcal{L}_0|\leq 2q$. These are the only lines which might be counted more than once, and since each line contained $q$ points, each of these lines is counted at most $q$ times. Thus, we subtract $2q^2$ to remove lines from $\mathcal{L}_0$ from the count.  This explains the second term of Inequality \eqref{pbcount}. 

Suppose first that $2q^2\geq 10^{-5}\ \frac{q^3|\mathcal{L}|^2}{|X|^2}$. Then upon simplification, we get $|X|\gtrsim |\mathcal{L}| q^{1/2}$ and \eqref{thm-pb4MainInequality} follows. Henceforth we shall suppose that $2q^2\leq 10^{-5}\ \frac{q^3|\mathcal{L}|^2}{|X|^2}$, and thus
\begin{equation}\label{pbcard}
    |\mathcal{L}_1|\gtrsim \frac{q^3|\mathcal{L}|^2}{|X|^2}.
\end{equation}

Define the planebrush $P_1=\bigcup_{l\in \mathcal{L}_1} l$. Our next task is to obtain a lower bound for $|P_1|$. To do this, define $S=\{(p,l): p\in l \text{ and } l\in \mathcal{L}_1\}$ to be the set of point-line incidence pairs in the planebrush $P_1$. We split into two cases.

\medskip

\underline{Case 1 (Planebrush lines are essentially disjoint):} For at least half the pairs $(p,l)$ in $S$, we have that $l$ is the unique line from $\mathcal{L}_1$ containing $p$. Roughly speaking, this corresponds to the situation when the lines of the planebrush are disjoint. Let us find a lower bound on the volume of $|P_1|$ in this case. Let $S'$ be the set of pairs $(p,l)$ in $S$ such that $l$ is the unique line from $\mathcal{L}_1$ containing $p$. The projection to the first coordinate of the pairs in $S'$ is injective by definition. Therefore, we get

$$
|X|\geq |P_1|\geq |S'|\geq \frac{|S|}{2}=\frac{q|\mathcal{L}_1|}{2}\gtrsim \frac{q^4|\mathcal{L}|^2}{|X|^2}.  
$$
We used \eqref{pbcard} for the final inequality above. Rearranging, we get 
$$
|X|\gtrsim q^{4/3}|\mathcal{L}|^{2/3}\gtrsim q^{1/3}|\mathcal{L}|,
$$ 
as desired. Note that we used the hypothesis that $|\mathcal{L}|\lesssim q^3$, i.e., $q\gtrsim |\mathcal{L}|^{1/3}$ in the second inequality above. This completes the proof in this case.

\medskip

\underline{Case 2 (Planebrush lines have many intersections):} For at least half the pairs $(p,l)$ in $S$, we have that $l$ is not the unique line from $\mathcal{L}_1$ passing through $p$. Denote the set of such pairs as $S''=S\setminus S'$, with $S'$ defined as in Case 1. Furthermore, let $P_1'\subset P_1$ be the projection of $S''$ to the first coordinate (this will not be injective). That is, $P_1'$ is set of points in $P_1$ that have at least two lines from $\mathcal{L}_1$ passing through them. Our goal is to find a lower bound on $|P_1'|$. We now make the following claim, which we will prove by geometric arguments.

\medskip

\underline{Claim 3 (Separating the planebrush):} The set $P_1'$ is disjoint from the set $\bigcup_{l\in \mathcal{L}_1^c} l$. Therefore, we have
\begin{equation}\label{disjunipb}
    |X|\geq |P_1'|+\bigg|\bigcup_{l\in \mathcal{L}_1^c} l\bigg|.
\end{equation}

\medskip

The utility of this claim is that we can analyze the planebrush in isolation, and then apply the inductive hypothesis to the remaining lines. To prove the claim, let  $p\in P_1'$. By definition, there exists $l_1,l_2\in \mathcal{L}_1$ such that $p\in l_1$ and $p\in l_2$. By planiness, there exists a plane $\Pi_p$ containing all the lines passing through $p$ (even if, a priori, these lines are not part of the planebrush). Moreover, $\Pi_p$ must be the span of the lines $l_1$ and $l_2$. Note that $l_1$ and $l_2$ meet $\Pi_{x_1}$ at two points (these could be at infinity). Hence $\Pi_p$ and $\Pi_{x_1}$ intersect in a line $l$ passing through these two points (See Figure \ref{pbstep}). This is a non-trivial observation since in four and higher dimensions, two generic planes meet in a point. Thus $\Pi_{x_1}$ and $\Pi_p$ are contained inside a 3-space of $\FF_q^n$ (generically you would need a 4-space, i.e. the span of $\Pi_{x_1}$ and $\Pi_p$). 

 \begin{figure}[h]
\includegraphics[scale=0.5]{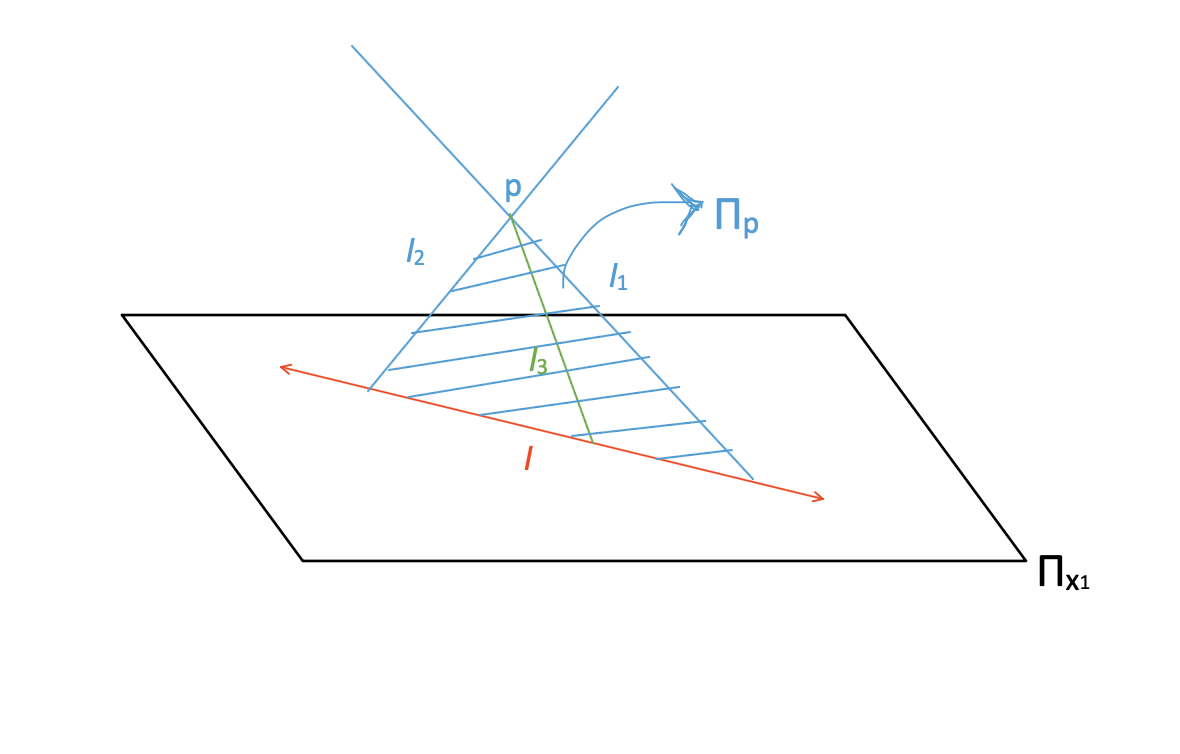}
\centering
\caption[Key step in planebrush argument]{Key step in planebrush argument}
\label{pbstep}
\end{figure}
We have reduced to a 3-dimensional configuration. Let $l_3$ be any line passing through $p$, a priori not in the planebrush. By 3-dimensional geometry, either $l_3$ intersects $\Pi_{x_1}$ at a point on $l$, or else $l_3$ is parallel to $l$ (See Figure \ref{pbstep}). Thus by definition, we have $l_3\in \mathcal{L}_1$, i.e., $l_3$ is a line in the planebrush. Since $l_3$ was an arbitrary line passing through $p$, we have now shown that $p$ does not intersect any line from $\mathcal{L}_1^c$, i.e., any line from outside the planebrush. This establishes Claim 3 as $p$ was an arbitrary point in $P_1'$.

\medskip

Our next task is to bound $|P_1'|$ from below. Define
\begin{equation}\label{constdens}
\mathcal{L}_1'=\big\{l\in \mathcal{L}_1: |P_1' \cap l|>q/200\big\}.
\end{equation}
That is, $\mathcal{L}_1'$ is the set of lines in the planebrush that have at least $0.5\%$ of their points in $P_1'$. Henceforth, we shall focus our attention on the lines in $\mathcal{L}_1'$. To be more precise, we shall be interested in the pairs $(p,l)\in P_1'\times \mathcal{L}'_{1}$, where $p\in l$. We would now like to justify why this is a reasonable thing to do.

By definition, we have 
\[
|S''|=\sum_{p\in P_1'} \sum_{l \in \mathcal{L}_1} \chi_l(p)=\sum_{l \in \mathcal{L}_1} \sum_{p\in P_1'} \chi_l(p)=\sum_{l\in \mathcal{L}_1}|l\cap P_1'|.
\] 
Since we are in Case 2, we have $|S''|\geq |S|/2=q|\mathcal{L}_1|/2$. Thus we have
$$
\sum_{l\in \mathcal{L}_1'} |l \cap P_1'|+\sum_{l\in \mathcal{L}_1\setminus \mathcal{L}_1'} |l \cap P_1'| \geq \frac{q|\mathcal{L}_1|}{2},
$$
which implies
\begin{equation}\label{massconv}
\sum_{l\in \mathcal{L}_1'} |l \cap P_1'| \geq \frac{q|\mathcal{L}_1|}{2}-\sum_{l\in \mathcal{L}_1\setminus \mathcal{L}_1'} |l \cap P_1'|\geq \frac{q|\mathcal{L}_1|}{2}-\frac{q|\mathcal{L}_1|}{100}=q|\mathcal{L}_1| \frac{49}{100}.
\end{equation}
The above inequality guarantees we have kept a substantial proportion of the point-line pairs as we restrict our attention to $\mathcal{L}_1'$. Furthermore, we know that $\sum_{l\in \mathcal{L}_1'} |l \cap P_1'| \leq q |\mathcal{L}_1'|$. Combining this with \eqref{massconv} gives us
\begin{equation}\label{linescons}
    |\mathcal{L}_1'|\geq \frac{49}{100}|\mathcal{L}_1|.
\end{equation}
Thus, we are keeping a substantial fraction of the lines of the planebrush as well. We conclude that restricting our attention to $\mathcal{L}_1$ gives us the useful property \eqref{constdens}, and also we do not lose too many point-line pairs \eqref{massconv} or too many lines \eqref{linescons}. 

Note that we can foliate $\FF_q^n$ into 3-spaces as follows:
$$
\FF_q^4=\Pi_{x_1}\ \sqcup\ \Big(\bigsqcup_\alpha (V_\alpha\setminus \Pi_{x_1})\Big),
$$
where the union is taken over all 3-spaces $V_\alpha$ containing $\Pi_{x_1}$. Every line in the planebrush must be contained in some $V_\alpha$. Correspondingly, we have a decomposition of the lines in $\mathcal{L}_1'$ as
\begin{equation}\label{linesdecomp}
    \mathcal{L}_1'=\mathcal{L}_{\Pi} \sqcup \Big(\bigsqcup_\alpha \mathcal{L}_\alpha \Big).
\end{equation}

Furthermore, the planebrush set $P_1'$ decomposes as
\begin{equation}\label{pbdecomp}
    P_1'=\Big(\bigcup_{l\in \mathcal{L}_\Pi} l\cap P_1' \Big) \sqcup \Big[\bigsqcup_\alpha \Big(\bigcup_{l\in \mathcal{L}_\alpha} (l\cap P_1') \setminus \Pi_{x_1} \Big) \Big]. 
\end{equation}

We first prove a lower bound on $|\bigcup_{l\in \mathcal{L}_\alpha} (l\cap P_1') \setminus \Pi_{x_1}|$ for each $\alpha$. Fix $\alpha$, and consider the set of lines $\mathcal{L}_\alpha$ in the 3-space $V_\alpha$. For each $l\in \mathcal{L}_\alpha$, we know that $|l\cap P_1'|\geq q/100$ and that $l$ intersects $\Pi_{x_1}$ in at most one point. It follows that $|(l\cap P_1') \setminus \Pi_{x_1}|\geq q/200$. 
By hypothesis, we have $|\mathcal{L}_\alpha|\leq 3q^2$, with at most $2q$ lines contained in a common 2-plane.
We may therefore apply Proposition \ref{hbprop0}$'$, with $(P_1'\setminus \Pi_{x_1})\cap V_\alpha$ in place of $X$. We conclude that 
\begin{equation}\label{pbtohb}
    \Big|\bigcup_{l\in \mathcal{L}_\alpha} (l\cap P_1') \setminus \Pi_{x_1}\Big|\gtrsim |\mathcal{L}_\alpha|q^{1/2}.
\end{equation}
Furthermore, by Proposition \ref{cordprop}$'$, we have 
\begin{equation}\label{cordpb}
    \Big|\bigcup_{l\in \mathcal{L}_\Pi} l\cap P_1' \Big|\gtrsim |\mathcal{L}_\Pi|q \gtrsim |\mathcal{L}_\Pi|q^{1/2}.
\end{equation}
Combining \eqref{pbdecomp}, \eqref{pbtohb} and \eqref{cordpb}, we get
\begin{equation}
    |P_1'|\gtrsim |\mathcal{L}_\Pi|q^{1/2} +\sum_\alpha |\mathcal{L}_\alpha|q^{1/2}=\Big(|\mathcal{L}_\Pi| +\sum_\alpha |\mathcal{L}_\alpha|\Big)q^{1/2}.
\end{equation}
Note that \eqref{linesdecomp} implies that $|\mathcal{L}_\Pi| +\sum_\alpha |\mathcal{L}_\alpha|=|\mathcal{L}_1'|$. Therefore we get
\begin{equation}\label{pbound}
    |P_1'|\gtrsim |\mathcal{L}_1'|q^{1/2} \gtrsim |\mathcal{L}_1|q^{1/2}\gtrsim |\mathcal{L}_1|q^{1/3}.
\end{equation}

We used \eqref{linescons} in the second inequality above. Now that we have a good lower bound on the volume of the planebrush, we can apply the inductive hypothesis \eqref{thm-pb4MainInequality} to the union of the remaining lines disjoint from the planebrush (the second term on the right side of \eqref{disjunipb}). Together with \eqref{pbound}, this yields
$$
    |X|\geq |P_1'|+\Big|\bigcup_{l\in \mathcal{L}_1^c} l\Big|\gtrsim |\mathcal{L}_1|q^{1/3}+|\mathcal{L}^c_1|q^{1/3}=|\mathcal{L}|q^{1/3}.
$$
This closes the induction and completes the proof.
\end{proof}

\section{Acknowledgements}
All three authors were supported by NSERC Discovery Grants.

\end{document}